\newtheorem{theorem}{Theorem}[section]
\newtheorem{comment}{Comment}[section]
\newtheorem{definition}{Definition}[section]
\numberwithin{equation}{section}
\newcommand{\bet}{{\boldsymbol \eta}}
\def\by{{\boldsymbol y}}
\def\b1{{\mathbf 1}}
\def\bz{{\boldsymbol z}}
\def\bof{{\boldsymbol f}}
\def\bx{{\boldsymbol x}}
\def\cM{{\mathcal M}}
\def\cF{{\mathcal F}}
\def\cL{{\mathcal L}}
\def\cI{{\mathcal I}}
\def\cJ{{\mathcal J}}
\def\cG{{\mathcal G}}
\def\bbR{{\mathbb R}}
\def\bbP{{\mathbb P}}
\begin{document}

%\begin{frontmatter}
\title{A predictive approach to generalized\\arithmetic means} 
\author{Henryk Gzyl \\
Centro de Finanzas IESA, Caracas, Venezuela.\\
henryk.gzyl@iesa.edu.ve}

\date{}
 \maketitle

\setlength{\textwidth}{4in}
%\end{frontmatter}

\vskip 1 truecm
\baselineskip=1.5 \baselineskip \setlength{\textwidth}{6in}
%\newpage
\begin{abstract}
The goal of this note is to provide a geometric setting in which  generalized arithmetic means are best predictors in an appropriate metric. This characterization provides a geometric interpretation to the concept of certainty equivalent. Besides that, in this geometric setting there also exists the notion of conditional expectation as best predictor given prior information. This leads to a notion of conditional preference and to the notion of conditional certainty equivalent, which turns out to be consistent with the notion of fair pricing.

\end{abstract}

\noindent {\bf Keywords:} Generalized arithmetic means, best predictors, utility functions, sequential certainty equivalent. \\
\noindent {\bf AMS Classification}: 60A99, 62A01, 91B02, 91B16\\ 
 
\begin{spacing}{0.05}
   \tableofcontents
\end{spacing}

\section{Introduction and Preliminaries} 
Generalized arithmetic means are about a century old. The notion seems to have appeared in connection with the concept of certain equivalent. Some review papers, emphasizing the connection of generalized means to utility theory are Muliere and Parmigiano's \cite{MuPar} and Pagani's \cite{Pag}.\footnote{Beware of typos}Consider Matkowski and P\'ales \cite{MP} or P\'ales \cite{Pal}, in which a large number of the original papers are reviewed. Much of the work cited there, for example Bemporad's \cite{Bem}, Bonferroni's \cite{Bon}, de Finetti's \cite{deF}, Kolmogorov's \cite{Ko} and Nagumo \cite{Nag}, to cite some of the earliest, deal with the issue of characterizing such means. Interesting as well is the point of view of Fishburn's \cite{Fi}. To develop the motivational connection, consider the definition of certain equivalent.
\begin{definition}\label{util}
A utility function $u:\cI\to\bbR$ is a strictly increasing, continuous function defined on some interval $\cI$ of the real line. Suppose that $\{x_1,...,x_n\}$ denote the future values of some (random) future payoff, and suppose that they are equi-probable. The certain equivalent of this cash flow is defined by
 \begin{equation}\label{certeq}
c = u^{-1}\Big(\frac{1}{n}\sum_{k=1}^n u(x_k)\Big).
\end{equation}
\end{definition}
The notion certain equivalent of a cash flow is used in finance and in insurance either for pricing purposes or for decision making. It is interpreted as saying that an investor is indifferent between receiving the certain equivalent $c$ or the random cash flow. See Eeckhoudt et al. \cite{EGS} for more on this. The concept of expected utility is part of the more elaborate notion of ambiguity aversion, see \cite{E}, \cite{S} and \cite{GL} for example, but that is a subject that we do not deal with in this work. 

Certainly, when $u(x)=x$ it coincides with the standard arithmetic mean. This suggests the following generalization.
\begin{definition}\label{util}
Let $f:\cI\to\bbR$ be a strictly increasing, continuous function defined on some interval $\cI$ of the real line. The generalized arithmetic $f$-mean of a collection $\{x_1,...,x_n\}\subset \cI$ is defined by
\begin{equation}\label{arim}
m_f(\{x_1,...,x_n\}) = f^{-1}\Big(\frac{1}{n}\sum_{k=1}^n f(x_k)\Big).
\end{equation}
\end{definition}
\begin{comment}\label{com1}
For the definition of generalized arithmetic mean it suffices that $f$ be strictly monotone so that $f{^-1}$ is well defined. We the strictly increasing case relates to utility theory be reinterpreting the function. Besides, when we want to think of $f$ as a utility function, we shall change its name too $u.$
\end{comment}
The references cited a few paragraphs above deal with the properties, characterization and extensions of the notion of utility function. In this work we go in a different direction. mentioned that when $f$ is the identity map, (\ref{arim}) is the standard arithmetic mean. The arithmetic mean is the number closest to  $\{x_1,...,x_n\}$ in the standard Euclidean distance. 

This suggests the following question: Does there exist a distance on $\bbR$ or $\bbR^d$ for that mater, in which (\ref{arim}) is a ``best predictor''? That is, does there exist a distance on $\cI$ for which (\ref{arim}) is the point in $\cI$ closest to $\{x_1,...,x_n\}$?  

The aim of this note is to prove that the answer is in the affirmative, and having done that, we shall then explore the properties of such $f-$mean. When instead of set of numbers we consider $\cI-$valued random variables, we are led to the notion of $f-$expected values and an $f$-conditional expectation. At this point we mention that the best predictor that we obtain below, coincides with the notion of conditional certainty equivalent proposed by Fritelli and Maggis \cite{FM} in a different setting. We shall see as well that the notion of empirical $f$-mean and $f$-Law of Large Numbers obtain as well in this setup. 

The remainder of this note is organized as follows. In Section 2 we show how to define a metric on the convex set $\cM=\cI^d$ ($d\geq1$) in such a way that the appropriate extension of (\ref{arim}) is the ``best predictor'' of a set of points in $\cM.$ In Section 3 we consider $\cM$-valued random variables and examine the notion of $f$-expected mean and $f$-conditional expected mean. If we think of $f$ as a utility function, then the $f$-expected mean coincides with the standard notion of certain equivalent. In Section 4 we consider the notion of conditional preference and explain why it is consistent.In Sections  5 we examine some simple aspects of this issue, in particular we introduce the notion of sequential conditional certainty values and prove that they are martingales, therefore it makes sense to thing of them as prices. It is in Section 6 that we consider the empirical sample generalized $f$-mean, verify that it is an unbiased estimator of the generalized mean, and verify that the Law of Large Numbers (LLN) applies and establishes a convergence of the estimator to the mean. We consider some extensions in Section 6 and sum up the contents in the last section. 

\section{The $f$-distance on $\cM.$}
Let $f:\cI\to\bbR$ be a strictly increasing function defined on an interval $\cI\subset\bbR,$ and for not to use new symbols, let $\cM=\cI^d$ and $\bof:\cM\to\bbR^d$ be $d$-dimensional map defined by $\bof^i(\bx)=f(x^i)$ for $i=1,...,d.$ This map is a continuous homeomorphism between $\cM$ and $\bof(\cM).$ We shall use super-scripts to label components and sub-scripts to label entities of the same kind. Thus $x^i$ is the $i$-th component of the $d$-vector $\bx.$ Define the $d_f$-distance between points $\bx,\by\in\cM$ by
\begin{equation}\label{fdist}
d_f(\bx,\by)^2 = \sum_{k=1}^d\big(f(x^k)-f(y^k)\big)^2 = \|\bof(\bx)-\bof(\by)\|^2.
\end{equation}
The fact that $\bof$ is bijective clearly implies that
$$d_f(\bx,\by) = 0 \Leftrightarrow \bx = \by$$
 Thus, $d_f$ is clearly a distance because it is built upon the Euclidean distance. We might call it the $f$-distorted distance on $\cM.$ 
\begin{theorem}\label{fmean}
Let $\{\bx_1,...,\bx_n\}\subset\cM$ be some set of points. The $\by\in\cM$ satisfying
\begin{equation}\label{min1}
\by = argmin\{\sum_{j=1}^Nd_f(\bx_j,\bz) | \bz\in\cM\}
\end{equation}
is given by
\begin{equation}\label{vmean}
\by = \bof^{-1}\Big(\frac{1}{N}\sum_{j=1}^N\bof(\bx_j)\Big).
\end{equation}
Identity (\ref{vmean}) is to be read componentwise.
\end{theorem}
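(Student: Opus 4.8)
The plan is to transport the entire problem through the homeomorphism $\bof$. Since $f$ is continuous and strictly increasing on the interval $\cI$, the image $f(\cI)$ is again an interval, hence $\bof(\cM)=f(\cI)^d$ is a convex box in $\bbR^d$ and $\bof$ is a bijection of $\cM$ onto it. Writing $\bu_j=\bof(\bx_j)$ and $\bw=\bof(\bz)$, the constraint $\bz\in\cM$ becomes $\bw\in\bof(\cM)$, and by the defining identity (\ref{fdist}) the objective of (\ref{min1}) turns into $G(\bw):=\sum_{j=1}^N\|\bu_j-\bw\|$. The candidate (\ref{vmean}) is exactly $\bof^{-1}(\bw^\star)$ with $\bw^\star:=\tfrac1N\sum_{j=1}^N\bu_j$ the centroid of the transported points, which lies in the convex set $\bof(\cM)$; so the constraint is harmless and everything reduces to the unconstrained question of whether the centroid minimizes $G$.

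The natural next step is the first-order condition. The function $G$ is convex on $\bbR^d$ and, away from the data points, differentiable with $\nabla G(\bw)=\sum_{j=1}^N (\bw-\bu_j)/\|\bw-\bu_j\|$, so any stationary point is automatically a global minimizer. Setting $\nabla G=\mathbf 0$ yields the Fermat--Weber equation $\sum_{j=1}^N(\bw-\bu_j)/\|\bw-\bu_j\|=\mathbf 0$: the \emph{unit} vectors pointing from $\bw$ to the data must cancel.

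Here I expect the genuine obstacle, and it is structural rather than computational. The centroid $\bw^\star$ satisfies $\sum_j(\bw^\star-\bu_j)=\mathbf 0$, but dividing each term by its length $\|\bw^\star-\bu_j\|$ destroys this cancellation, so $\bw^\star$ does not solve the Fermat--Weber equation in general. Already in dimension one with $f=\mathrm{id}$ and data $0,1,10$ the sum $\sum_j|z-u_j|$ is minimized at the median $z=1$, whereas (\ref{vmean}) returns the mean $11/3$. Thus the minimizer of (\ref{min1}) as literally written is the geometric median of $\{\bu_j\}$ pulled back by $\bof^{-1}$, which coincides with the $f$-mean only in special, symmetric configurations; the identity (\ref{vmean}) therefore cannot be established for the sum-of-distances objective.

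What does close the argument is the squared objective. Because (\ref{fdist}) introduces $d_f$ only through $d_f(\bx,\by)^2$, the objective in (\ref{min1}) is naturally read as $\sum_{j=1}^N d_f(\bx_j,\bz)^2=\sum_{j=1}^N\|\bu_j-\bw\|^2$, which is strictly convex with gradient $2\sum_{j=1}^N(\bw-\bu_j)$; this vanishes precisely at $\bw=\bw^\star$, and the reduction of the first paragraph then finishes the proof in one line, the only remaining point being the already-noted fact that $\bw^\star\in\bof(\cM)$. I would therefore carry out the proof for the squared distance and record the unsquared form in (\ref{min1}) as a slip, since under the literal reading the stated conclusion is false.
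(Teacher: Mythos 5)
Your proof is correct and follows essentially the same route as the paper's: transport the problem through $\bof$, minimize the sum of squared Euclidean distances over the convex set $\bof(\cM)$, note that the centroid of the transported points lies in that set, and pull back by $\bof^{-1}$. Your diagnosis of the unsquared objective in (\ref{min1}) as a slip is also confirmed by the paper itself: its proof silently minimizes $\sum_{j=1}^N\big(\bof(\bx_j)-\bz\big)^2$, i.e.\ squared distances, and your one-dimensional counterexample (data $0,1,10$, where the sum of distances is minimized at the median $1$ rather than at the mean $11/3$) shows that the statement as literally written is false, so the correction to squared distances is necessary and not merely cosmetic.
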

\begin{proof}
It is rather simple, because the result holds componentwise in $\bbR^d.$ Note that, since $\cI$ is convex, so are $\cM$ and $\bof(\cM).$ Therefore means of points in these sets stay in the sets. The existence of a point $\bet\in\bbR^d$ minimizing
$$\sum_{j=1}^N(\bof(\bx_j)-\bz)^2$$
is clear, as well as that the point is given by
$$\bet = \frac{1}{N}\sum_{j=1}^N\bof(\bx_j) \in \bof(\cM).$$
Now set $\bet=\bof(\by)$ to obtain (\ref{vmean}).
\end{proof}

\section{Generalized $f$-expected value and variance}
Consider to begin with, a probability space $(\Omega,\cF,\bbP),$ and for not to overburden the notation we shall consider $\cI$-valued random variables only, which will be denoted by capital letters: $X:\Omega\to \cI$ for example. We reserve $E$ to denote expected values with respect to $\bbP.$ 
\begin{definition}\label{efmean}
With the notations introduced above, we say that the random variable is $f$-integrable or (p-integrable) whenever $E[|f(X)|^p]<\infty.$ In this case we write $X\in\cL_f^p.$ Or $\cL_f^p(\cF)$ if we need to specify the $\sigma$-algebra. The $f$-distance between two random variables $X,Y \in \cL_f^2$ is defined by
\begin{equation}\label{fdist2}
d_f(X,Y)^2 = E[\big(f(X) - f(Y)\big)^2] = \int_\Omega\big(f(X) - f(Y)\big)^2d\bbP.
\end{equation}
\end{definition}

It is again easy to verify that this is a distance if we identify random variables up to sets of $\bbP-$measure equal to $0.$ The best predictors in this distance come up as follows.
\begin{theorem}\label{bestpred1}
Let $X\in\cL_f^2$ and let $\cG$ be a sub$-\sigma$-algebra of $\cF.$ Then there is a unique (up to $\bbP-$null sets) $\cG$-measurable random variable $X^*$ such that
\begin{equation}\label{bestpred2}
X^* = argmin\{d_f(X,Y)^2 | Y\in \cL_f^2(\cG)\}
\end{equation}
It can be computed as
\begin{equation}\label{bestpred3}
X^* = f^{-1}\Big(E[f(X)|\cG]\Big).
\end{equation}
\end{theorem}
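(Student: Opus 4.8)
The plan is to reduce the statement to the classical Hilbert-space characterization of conditional expectation by undoing the distortion $f$, in exact parallel with the way Theorem~\ref{fmean} was reduced to the ordinary arithmetic mean. The change of variables that carries the argument is $U := f(X)$, and, for a competitor $Y\in\cL_f^2(\cG)$, $V := f(Y)$; the whole problem then transports to the undistorted $L^2$ geometry, where the answer is standard.

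First I would record the bookkeeping. Since $X\in\cL_f^2$, we have $U=f(X)\in L^2(\Omega,\cF,\bbP)$. Because $f$ is a strictly increasing continuous bijection of $\cI$ onto the interval $f(\cI)$, the map $Y\mapsto f(Y)$ is a bijection of $\cL_f^2(\cG)$ onto the set of $\cG$-measurable, square-integrable random variables taking values in $f(\cI)$; its inverse $V\mapsto f^{-1}(V)$ is $\cG$-measurable whenever $V$ is, by continuity of $f^{-1}$. Under this substitution the objective becomes
\begin{equation*}
d_f(X,Y)^2 = E\big[(f(X)-f(Y))^2\big] = \|U-V\|_{L^2}^2,
\end{equation*}
so minimizing $d_f(X,Y)^2$ over $Y\in\cL_f^2(\cG)$ is identical to minimizing $\|U-V\|_{L^2}^2$ over all $\cG$-measurable $V\in L^2$ that are constrained to take values in $f(\cI)$.

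Next I would temporarily drop the range constraint and invoke the standard theory. The space $L^2(\Omega,\cG,\bbP)$ is a closed subspace of the Hilbert space $L^2(\Omega,\cF,\bbP)$, so the orthogonal projection of $U$ onto it exists and is unique up to $\bbP$-null sets; this projection is exactly $V^* = E[f(X)\mid\cG]$, which is square-integrable since conditional expectation is an $L^2$-contraction.

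The one point that genuinely needs the interval structure, and which I expect to be the main obstacle, is verifying that the unconstrained minimizer $V^*$ respects the constraint, i.e. that $E[f(X)\mid\cG]$ takes values in $f(\cI)$ almost surely; only then is $X^*=f^{-1}(V^*)$ a bona fide element of $\cL_f^2(\cG)$ and the unconstrained minimum attained inside the feasible set. This is where convexity of $\cI$, hence of $f(\cI)$, enters through an averaging (conditional-Jensen) argument: since $f(X)\in f(\cI)$ a.s., its conditional expectation cannot escape $f(\cI)$. Concretely, if $f(\cI)$ has left endpoint $a$, then for any $G\in\cG$ with $\bbP(G)>0$ the identity $\int_G E[f(X)\mid\cG]\,d\bbP=\int_G f(X)\,d\bbP$ together with $f(X)\ge a$ a.s. forces $E[f(X)\mid\cG]\ge a$ a.s., with strict inequality when $a\notin f(\cI)$, and symmetrically at the right endpoint. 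Once $V^*\in f(\cI)$ is established, $V^*$ lies in the feasible set; being the minimizer over the larger space $L^2(\Omega,\cG,\bbP)$, it is a fortiori the constrained minimizer, so $X^*=f^{-1}\big(E[f(X)\mid\cG]\big)$ realizes (\ref{bestpred3}). Uniqueness up to $\bbP$-null sets then follows from uniqueness of the orthogonal projection together with injectivity of $f^{-1}$.
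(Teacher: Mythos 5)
Your proposal follows essentially the same route as the paper: transport the problem via $U=f(X)$ to the classical $L^2(\cG)$ orthogonal-projection characterization of conditional expectation, identify the minimizer as $E[f(X)\mid\cG]$, and pull back through $f^{-1}$. The one addition is your explicit verification that $E[f(X)\mid\cG]$ remains in $f(\cI)$ almost surely (so that $f^{-1}$ may be applied and the constrained and unconstrained minimizers coincide) --- a point the paper's proof leaves implicit --- and your endpoint argument for this is correct.
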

\begin{proof}
The proof follows the standard format. See Jacod and Protter's \cite{JP} for example. Note that $f(X)$ is a $\cF$-measurable, $f(\cI)$-valued, square integrable random variable, therefore, there exists a $\cG$-measurable random variable $Y^*$ unique up to $\bbP$-null sets, which minimizes $E[\big(f(X)-Y)^2$ over $Y\in\cL^2(\cG).$ This random variable is given by $Y^*=E[f(X)|\cG].$ Therefore, defining $X^*=f^{-1}(Y^*)$ we obtain a random variable with the desired properties.
\end{proof}
From now on we denote $X^*$ by:
\begin{equation}\label{bestpred4}
E_f[X|\cG]= f^{-1}\Big(E[f(X)|\cG]\Big).
\end{equation}
When $\cG=\{\emptyset,\Omega\},$ that is when no information is given, the constant that best approximates $X$ is its generalized arithmetic $f$-mean value:  
\begin{equation}\label{bestpred5}
E_f[X] = f^{-1}\Big(E[f(X)]\Big).
\end{equation}
The $f$-conditional expectation shares many of the standard properties of the standard conditional expectation. Except for the linearity and Jensen's inequality, the monotonicity and continuity properties hold. Interesting for us here is that the ``tower'' property holds, that is:
\begin{theorem}
Let $\cG_1\subset\cG_2\subset\cF$ be sub-$\sigma$-algebras, and let $X$ be in $\cL_f^1.$ Then
\begin{equation}\label{tower}
E_f\big[E_f[X|\cG_2]|\cG_1\big] = E_f[X|\cG_1].
\end{equation}
When $\cG_1=\{\emptyset,\Omega\},$ the previous identity yields (\ref{bestpred5}) again.
\end{theorem}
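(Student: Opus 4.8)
The plan is to reduce the identity (\ref{tower}) to the classical tower property of ordinary conditional expectation by exploiting the fact that $f$ and $f^{-1}$ are mutually inverse, so that the two occurrences of $f$ introduced by the outer $f$-conditional expectation telescope. First I would unwind the left-hand side of (\ref{tower}) using the definition (\ref{bestpred4}), peeling off one layer at a time.

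Concretely, set $Z = E_f[X|\cG_2] = f^{-1}\big(E[f(X)|\cG_2]\big)$. Applying (\ref{bestpred4}) once more with the $\sigma$-algebra $\cG_1$ gives
$$E_f[Z|\cG_1] = f^{-1}\big(E[f(Z)|\cG_1]\big).$$
The crucial observation is that $f(Z) = f\big(f^{-1}(E[f(X)|\cG_2])\big) = E[f(X)|\cG_2]$, since $f\circ f^{-1}$ is the identity on the range of $f$. Substituting this and invoking the ordinary tower property $E\big[E[f(X)|\cG_2]\,\big|\,\cG_1\big] = E[f(X)|\cG_1]$ — which applies because $X\in\cL_f^1$ guarantees that $f(X)$ is genuinely integrable — yields
$$E_f\big[E_f[X|\cG_2]|\cG_1\big] = f^{-1}\big(E[f(X)|\cG_1]\big) = E_f[X|\cG_1],$$
which is exactly (\ref{tower}). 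The final specialization to $\cG_1 = \{\emptyset,\Omega\}$ then returns (\ref{bestpred5}) directly from the definition.

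I expect the only point genuinely requiring care — as opposed to the algebraic cancellation, which is immediate — to be the well-definedness of each application of $f^{-1}$: one must check that the intermediate conditional expectations $E[f(X)|\cG_2]$ and $E[f(X)|\cG_1]$ take values ($\bbP$-a.s.) in the range $f(\cI)$, so that $f^{-1}$ may legitimately be applied. Since $f$ is continuous and strictly increasing on the interval $\cI$, the range $f(\cI)$ is itself an interval, and a conditional expectation of an $f(\cI)$-valued random variable lands in the closed convex hull of that range. Verifying that the relevant values stay inside $f(\cI)$, and hence inside the domain of $f^{-1}$, is the one step where the convexity of $\cI$ and the finiteness hypotheses must be used; the remainder of the argument is a purely formal consequence of the classical result.
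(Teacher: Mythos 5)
Your argument is correct and is essentially the proof the paper intends: the paper merely remarks that the identity ``is rather simple using the same property for the standard conditional expectation,'' and your telescoping computation $f\big(f^{-1}(E[f(X)|\cG_2])\big)=E[f(X)|\cG_2]$ followed by the classical tower property is exactly that argument, spelled out. Your additional check that the intermediate conditional expectations stay in $f(\cI)$ (so that $f^{-1}$ is applicable) is a detail the paper omits, and it is the right point to flag.
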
 
 The proof of this assertion is rather simple using the same property for the standard conditional expectation. Let us now consider the prediction error in the $f$-distance. This corresponds to the notion of $f$-variance given by
\begin{definition}\label{fvar1}
Let $X\in\cL_f^2.$ The error of predicting $X$ by its $f$-mean is given by
\begin{equation}\label{fvar2}
\sigma^2_f(X) = d_f(X,E_f[X])^2 = E\big[\big(f(X) - f(E_f[X])\big)^2\big] =  E\big[\big(f(X) - E[f(X)]\big)^2\big].
\end{equation} 
\end{definition}
This is the standard variance of the variable transported onto $u(\cI).$ Similarly, we have a mixed total variance identity. For that, recall the standard notation for the conditional variance:
$$\sigma^2(X|\cG) = E[X^2|\cG]-\big(E[X|\cG])\big)^2 = E\big[\big(X - E[X|\cG]\big)^2|\cG\big]$$
Using this we also have:
\begin{theorem}\label{tvar1}
With the notations introduced above we have
\begin{equation}\label{tvar2}
\sigma_f^2(X) = E\big[\sigma_f^2(X|\cG)\big] + \sigma_f^2\big(E_f[X|\cG]\big).
\end{equation}
\end{theorem}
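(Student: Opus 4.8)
The plan is to reduce the claim to the classical law of total variance applied to the single random variable $Z := f(X)$. First I would observe that every term in (\ref{tvar2}) can be rewritten purely in terms of $Z$. By Definition \ref{fvar1}, the left-hand side is $\sigma_f^2(X) = E[(Z - E[Z])^2]$, which is precisely the ordinary variance of $Z$. For the conditional $f$-variance I read $\sigma_f^2(X|\cG)$ as the $f$-analog of the conditional variance displayed just above the theorem, namely $\sigma_f^2(X|\cG) = E[(f(X) - E[f(X)|\cG])^2 \mid \cG] = E[(Z - E[Z|\cG])^2 \mid \cG]$. Finally, since $f(E_f[X|\cG]) = E[f(X)|\cG] = E[Z|\cG]$ by (\ref{bestpred4}), and using $E[E[Z|\cG]] = E[Z]$, the $f$-variance of the $f$-conditional mean unfolds to $\sigma_f^2(E_f[X|\cG]) = E[(E[Z|\cG] - E[Z])^2]$, i.e. the ordinary variance of the $\cG$-measurable random variable $E[Z|\cG]$.

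Having made these identifications, I would establish the decomposition directly by splitting the centered variable,
$$Z - E[Z] = \big(Z - E[Z|\cG]\big) + \big(E[Z|\cG] - E[Z]\big),$$
then squaring and taking expectations. The expectation of the first squared term is $E\big[E[(Z - E[Z|\cG])^2 \mid \cG]\big] = E[\sigma_f^2(X|\cG)]$ after an application of the tower property of the \emph{ordinary} conditional expectation to remove the inner conditioning, while the expectation of the second squared term is exactly $\sigma_f^2(E_f[X|\cG])$. Thus the two diagonal terms reproduce the right-hand side of (\ref{tvar2}).

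The only point requiring a genuine argument is that the cross term vanishes, and this is the crux. Writing $W := E[Z|\cG] - E[Z]$, which is $\cG$-measurable, I would condition on $\cG$ and pull $W$ out to obtain
$$E\big[(Z - E[Z|\cG])\,W\big] = E\big[W\, E[\,Z - E[Z|\cG] \mid \cG\,]\big] = 0,$$
since $E[\,Z - E[Z|\cG] \mid \cG\,] = 0$. Everything else is bookkeeping. I do not anticipate any real obstacle: the hypothesis $X\in\cL_f^2$ guarantees $Z = f(X) \in L^2(\bbP)$, so all the expectations above are finite and each manipulation — in particular the orthogonality step and the two invocations of the tower property — is justified.
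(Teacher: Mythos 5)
Your proof is correct, and it rests on the same key identification the paper uses: because $f(E_f[X|\cG]) = E[f(X)|\cG]$ by (\ref{bestpred4}), every term in (\ref{tvar2}) is just the corresponding term of the classical law of total variance for the single variable $Z=f(X)$. Where you differ is in how that classical identity is then verified. The paper works with the moment form $\sigma_f^2(X)=E[f(X)^2]-(E[f(X)])^2$ and inserts $\pm\,E\big[(E[f(X)|\cG])^2\big]$ to regroup the terms, never touching a cross term. You instead split the centered variable as $(Z-E[Z|\cG])+(E[Z|\cG]-E[Z])$, expand the square, and kill the cross term by $\cG$-measurability of the second summand --- the projection/orthogonality proof. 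The two arguments are equivalent in length and both fully justified under $X\in\cL_f^2$; yours has the small advantage of making the geometric content explicit (the decomposition is orthogonal in $L^2$, which is in the spirit of the paper's ``best predictor'' theme), while the paper's add-and-subtract bookkeeping avoids even mentioning the cross term. One point worth keeping from your write-up: you state explicitly what $\sigma_f^2(X|\cG)$ means, namely $E[(f(X)-E[f(X)|\cG])^2\mid\cG]$, which the paper only defines implicitly by analogy with the displayed formula for $\sigma^2(X|\cG)$; making that definition explicit is a genuine improvement in rigor.
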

\begin{proof} 
It is just a computation. Begin with $\sigma^2_f(X) = E[f(X)^2]-\big(E[f(X)]\big)^2.$ Now note that
$$E[f(X)] = E[E[f(X)|\cG]] =  E[f\big(E_f[X\cG]\big)].$$
And now note that
$$E[f(X)^2] = E\big[E[f(X)^2|\cG]\big] = E\big[E[f(X)^2|\cG]\big] - E\big[f\big(E_f[X\cG]\big)^2] + E\big[f\big(E_f[X\cG]\big)^2].$$
Therefore 
$$\sigma_f^2(X) = E[\sigma^2_f(X|\cG)] + \sigma_f^2(E_f[X|\cG]).$$
\end{proof}
To close, we examine the notion of independence.  
It is natural to put
\begin{definition}\label{indep1}
Let $X\in\cL_f^1$ and let $\cG$ be a sub-$\sigma$-algebra of $\cF.$ $X$ is said to be $f$-independent of $\cG$ if and only if $E_f[X|\cG]=E_f[X].$
\end{definition}

Regretfully, except in the case when $cI=(0,\infty)$ and $f(x)=ln x,$ it is not true that this notion is equivalent to 
$$X\;\;\mbox{is said to be}\;f\;\mbox{independent of}\;\;\cG\;\;\Leftrightarrow \;\; E_f[XY]=E_f[X]E_f[Y]$$
for every bounded $\cG$-measurable $Y.$ But this may be a reason to use the logarithmic utility function when considering positive random variables.

 \subsection{Examples of arithmetic $f$-means}
In Table \ref{tab1} we compile a short list of simple examples. It is based on some of the standard utility functions
\begin{table}[h!]
\large
\centering
\begin{tabular}{|c c c|}\hline
$cI$         &          $f$  &  $E_f[X]$ \\\hline 
$[0,1]$  &    $x^{a}$   &   $\big(E[X^{a}]\big)^{1/a}$\\ 
$(0,\infty)$ &   $-1/x$ & $\big(E[1/X]\big)^{-1}$\\ 
$(0,\infty)$  &  $1-e^{-ax}$ & $-(1/a)\ln\big(E[e^{-aX}]\big)$\\ 
$\bbR$  &  $e^{ax}$ & $(1/a)\ln\big(E[e^{aX}]\big)$\\ 
$\bbR_+$ & $\ln x$  & $\exp\Big(E[\ln X]\Big)$\\
$\bbR$   & sinh(x)      &   $\sinh^{-1}\big(E[\sinh(X)]\big)$\\
$\bbR$   & $\Phi(x)$    & $q\big(E[\Phi(X)]\big)$\\\hline
\end{tabular}
\caption{Simple examples}
\label{tab1}
\end{table}

 Here $\Phi(x)$ is the cumulative distribution of a $N(0,1)$( or if you prefer, that of any other random variable with strictly increasing continuous distribution function), and $q(u):(0,1)\to\bbR$ is its associated quantile function (that is, the compositional inverse of $\Phi$).

\section{Application: Preferences under prior information}
Consider the situation of a decision maker, with utility function $u,$ that has to decide between cash flows $X$ and $Y$ under the presence of information given by a sub$-\sigma-$algebra $\cG.$ 
\begin{definition}\label{cchoice}
With the notations just introduced, we say that the decision maker prefers $Y$ to $X$ given $\cG$ whenever
\begin{equation}\label{cchoice}
 E_u[X|\cG] \leq E_u[Y|\cG].
\end{equation}
\end{definition}
Clearly, (\ref{cchoice}) is equivalent to $E[u(X)|\cG]\leq E[u(Y)|\cG].$ What is interesting here, is the following consequence of the tower property (ref{tower}) of conditional expectations. Consider a decision maker, that has information described by $\cG_1\subset\cG_2.$ Based on the information provided by $\cG_2,$ the decision maker prefers $Y$ to $X,$ that is $E_u[X|\cG_2] \leq E_u[Y|\cG_2].$ The decision maker wants to know whether basing his decision on the information provided by $\cG_1$ is consistent with this. The tower property of conditional expectation asserts that it is, because:
$$E_u[X|\cG_1] = E_u[E_u[X|\cG_2]|\cG_1] \leq E_u[E_u[Y|\cG_2]|\cG_1] = E_u[Y|\cG_1].$$

\section{Another potentially useful application: the conditional certainty equivalent}
This very short section contains an application of the notion of $f$-conditional expectation to the updating of the certain value when partial information becomes available. Instead of using the letter $f,$ let us switch to the $u$ of utility. 

So, let us suppose that we have a two time cash flow, given by $X_k$ at time $t_k,$ with $0 < t_1<...<t_m=T.$  And let $\cG_1\subset\cG_2\subset...\cG_m\subseteq\cF$  be a sequence of sub-$\sigma$-algebras such that $X_k$ is measurable with respect to $\cG_k.$ To simplify we might consider $\cG_k=\sigma(X_1,...,X_k$ the sub$-\sigma$-algebra generated by $\{X_1,...,X_k\}.$ The investor or decision maker wants to make decisions on his project at the chosen times $t_1<...,t_{m-1}\}$ within his investment time horizon $T,$ at which he monitors his project.  

The  investor has utility function $u$ such that $u(X_k)$ is integrable, and regards the certain value of the terminal cash flow as the price of the project at time $t=0:$
$$C(T) = u^{-1}\Big(E[u(X_T)]\Big).$$
At time $t_k$ the investor has recorded values $X_1,...,X_k$ of the project. We propose that a way to define the certainty equivalent for the remainder of the life of the project by the conditional certainty equivalent given by
\begin{equation}\label{conseq}
C(T|\cG_k) = E_u[X_T|\cG_k]= u^{-1}\Big(E[u(X_T)|\cG_k]\Big).
\end{equation}
Note in passing that when $u$ is the utility function of a risk averse investor (that is, when it is concave), from Jensen's inequality it follows that
$$C(T|\cG_k) = u^{-1}\Big(E[u(X_T)|\cG_k]\Big) \leq E[X_T|\cG_k]$$
and the extended Pratt's risk premium (Pratt \cite{Pr}) can be defined by
$$E[X_T|\cG_k]-C(T|\cG_k) = E[X_T|\cG_k] - E_u[X_T|\cG_k].$$
If we want to use the certain prices given by (\ref{conseq}) as prices for the project, the use is consistent with the Samuelson's dictum -Samuelson \cite{Sam} -that fair prices have to be modeled by martingales.\footnote{See the Theorem of Fair Game Futures Pricing} But in this case, they are $u$-martingales.
\begin{theorem}\label{mart1}
Let $u$ be a utility function as just mentioned in the previous paragraphs. Then the sequence 
\{$\pi_k=C(T|\cG_k):k=0,1,...,m\}$ is an a $u$-martingale with respect to $\{\Omega,\{\cG_k\}_{\{k=0,1,...,m\}},\bbP\},$ with value $\pi_0=C(T)$ at $t=0.$ 
\end{theorem}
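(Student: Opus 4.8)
The plan is to define a $u$-martingale analogously to an ordinary martingale but with $u$-conditional expectation in place of ordinary conditional expectation, and then reduce the entire claim to the standard martingale property for the process $u(\pi_k)$ under the usual conditional expectation. Recall from (\ref{conseq}) that $\pi_k = C(T|\cG_k) = u^{-1}\big(E[u(X_T)|\cG_k]\big)$, so applying $u$ to both sides gives $u(\pi_k) = E[u(X_T)|\cG_k]$. The key observation is that the process $M_k := u(\pi_k)$ is nothing but the ordinary conditional expectation of the fixed integrable random variable $u(X_T)$ with respect to the filtration $\{\cG_k\}$, which is a textbook (Doob-closed) martingale.

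First I would state precisely what ``$u$-martingale'' means: the sequence $\{\pi_k\}$ should be adapted (each $\pi_k$ is $\cG_k$-measurable) and should satisfy the $u$-martingale identity $E_u[\pi_{k+1}|\cG_k] = \pi_k$ for all $k$. The adaptedness is immediate from the definition, since $\pi_k$ is by construction a function of a $\cG_k$-measurable random variable. Second, I would verify the $u$-martingale identity by direct computation: using the definition (\ref{bestpred4}) of $E_u[\cdot|\cG_k]$ and the tower property (\ref{tower}) of the $u$-conditional expectation proved earlier,
\begin{equation}
E_u[\pi_{k+1}|\cG_k] = E_u\big[E_u[X_T|\cG_{k+1}]\,\big|\,\cG_k\big] = E_u[X_T|\cG_k] = \pi_k,
\end{equation}
where the middle equality is exactly the tower property applied with $\cG_k\subset\cG_{k+1}$. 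Alternatively, and perhaps more transparently, I would pass through $M_k = u(\pi_k) = E[u(X_T)|\cG_k]$ and invoke the ordinary tower property $E[M_{k+1}|\cG_k] = E\big[E[u(X_T)|\cG_{k+1}]|\cG_k\big] = E[u(X_T)|\cG_k] = M_k$, then apply $u^{-1}$ to recover the statement for $\pi_k$.

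Finally, I would confirm the boundary value $\pi_0 = C(T)$: since $\cG_0 = \{\emptyset,\Omega\}$ carries no information, (\ref{bestpred5}) gives $\pi_0 = E_u[X_T] = u^{-1}\big(E[u(X_T)]\big) = C(T)$, matching the asserted initial price. The integrability hypothesis $u(X_k)\in\cL^1$ ensures all the ordinary conditional expectations are well defined, so no technical gaps remain.

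I do not expect any genuine obstacle here, since the result is essentially a restatement of the Doob-martingale property after the change of variables $M_k = u(\pi_k)$. The only point requiring mild care is definitional rather than computational: one must first fix what ``$u$-martingale'' is meant to assert, because the process $\pi_k$ itself is generally \emph{not} a martingale in the ordinary sense (only $u(\pi_k)$ is). Once the $u$-martingale property is defined through the $u$-conditional expectation, the proof is an immediate consequence of the already-established tower property, and the slight subtlety is simply ensuring the reader understands that the martingale structure lives on the transported process $u(\pi_k)$ rather than on $\pi_k$.
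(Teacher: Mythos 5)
Your proof is correct and follows exactly the route the paper intends: the paper's own ``proof'' is a single sentence invoking the tower property (\ref{tower}), which is precisely the middle equality in your computation $E_u[\pi_{k+1}|\cG_k]=E_u\big[E_u[X_T|\cG_{k+1}]\,\big|\,\cG_k\big]=E_u[X_T|\cG_k]=\pi_k$. Your version is in fact more complete than the paper's, since you also pin down the definition of a $u$-martingale and verify the initial value $\pi_0=C(T)$ via (\ref{bestpred5}).
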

Again, the proof is rather easy and based on the tower property (\ref{tower}) for $u$-martingales. A possible extension of this simple model goes as follows. Let $\{(W_n,X_n):n\geq 0\}$ be a positive, bivariate process, describing the wealth of an investor and the cash flow of some project. As above, suppose that the process is defined on $(\Omega,\cF,\{\cG_n\}_{n\geq 0}, P)$ and that $(W_n,X_n)$ is measurable with respect to $\cG_n.$ If $u$ denotes the utility function of the investor, we can define the certainty equivalent of the risk $X$ by
\begin{equation}\label{conseq2}
u(W_n+C(T|\cG_n)) = E[u\big(W_T+X_T\big)|\cG_n] \Leftrightarrow C(T|\cG_n) = E_u(W_T+X_T)|\cG_n] - W_n.
\end{equation}

\subsection{Example of sequential certainty equivalents}
A nice situation exists when the payoffs on Section 4 are supposed to be modeled by a discrete time Markovian process. In this case one can further compute the conditional certainty values. If 
$$Pu(x)=\int_{\cI}u(\xi)P(d\xi,x)$$
describes the action of the one-step transition probability upon $u,$ then if the observation times are $t_k=k$ for $k=1,...,N,$ we can explicitly rewrite (\ref{conseq}) as
$$C(N|\cG_k) = P_{(N-k)}u(X_k), \;\;\;k=0,1,...,N-1,$$
\noindent where $P_j$ denotes the iteration of $P$ $j$-times. In this might make use of the Markov structure of the cash flow process to devise a decision process to opt out of a project when its certain value $C(N|\cG_k)$ fall some preassigned level $L.$ For this define
$$T_L = min\{k: C(N|\cG_k)<L\}, \;\;\mbox{or}\;\;\infty \;\;\mbox{whenever}\;\;C(N|\cG_k)\geq L$$
and estimate the probability that $T_L$ is lower than a certain investment horizon.

\section{Estimating the generalized, arithmetic $f$-mean}
As above, $f:\cI\to\bbR$ is a given, continuous increasing function, and $d_f$ denotes the distance it induces on $\cI$ or on the class of $\cI$-valued $f$-integrable random variables. Throughout this section $\{X_n: n\geq 1\}$ denotes a collection of $i.i.d$-$\cI$ valued random variables, and denote by $X$ a random variable with their common distribution. Suppose that $X\in\cL_f^2.$  From Section 2, in particular, from (\ref{vmean}), the following definition of estimator of the $f$-mean value is clear.
\begin{theorem}\label{empmean1}
With the notations introduced above, the estimator of the $f$-mean of $X$ defined in
\begin{equation}\label{empmean2}
\hat{m}_N(X) = f^{-1}\Big(\frac{1}{N}\big(f(X_1) + ... + f(X_N)\Big)
\end{equation}
is an unbiased estimator of $E_f[X].$
\end{theorem}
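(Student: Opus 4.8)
The plan is to interpret ``unbiased'' in the sense appropriate to this $f$-distorted framework, namely that the $f$-mean of the estimator reproduces the target: $E_f[\hat{m}_N(X)] = E_f[X]$. This is the natural notion here, since by Jensen's inequality the ordinary expectation $E[\hat{m}_N(X)]$ would generically differ from $E_f[X]=f^{-1}(E[f(X)])$ whenever $f$ is strictly convex or concave, so unbiasedness can only hold after transporting through $f$.

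The core of the argument is that $f$ and $f^{-1}$ cancel. First I would compute
\begin{equation*}
f\big(\hat{m}_N(X)\big) = f\Big(f^{-1}\big(\tfrac{1}{N}\sum_{k=1}^N f(X_k)\big)\Big) = \frac{1}{N}\sum_{k=1}^N f(X_k),
\end{equation*}
which is well defined because the convexity of $\cI$ (as in the proof of Theorem \ref{fmean}) guarantees the arithmetic average $\frac{1}{N}\sum_{k=1}^N f(X_k)$ lies in $\bof(\cM)$, hence in the domain of $f^{-1}$. Thus applying $f$ linearizes the estimator into the ordinary sample mean of the transported variables $f(X_k)$.

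Next I would take ordinary expectations and use that the $X_k$ are i.i.d. with common distribution that of $X$, together with $X\in\cL_f^2\subset\cL_f^1$, to get
\begin{equation*}
E\big[f(\hat{m}_N(X))\big] = \frac{1}{N}\sum_{k=1}^N E[f(X_k)] = E[f(X)].
\end{equation*}
Finally I would apply the definition \eqref{bestpred5} of the $f$-mean, obtaining
\begin{equation*}
E_f[\hat{m}_N(X)] = f^{-1}\big(E[f(\hat{m}_N(X))]\big) = f^{-1}\big(E[f(X)]\big) = E_f[X].
\end{equation*}

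I do not expect any serious obstacle; the only point requiring care is the conceptual one of specifying that unbiasedness is meant in the $f$-mean sense rather than the classical linear sense, and noting explicitly that $\frac{1}{N}\sum_{k=1}^N f(X_k)$ remains in the range of $f$ so that $f^{-1}$ may be applied. The computation itself reduces to the classical fact that the sample mean is an unbiased estimator of the population mean, applied to the linearized variables $f(X_k)$.
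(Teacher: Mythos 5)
Your proposal is correct and follows essentially the same route as the paper: the paper's one-line computation also interprets unbiasedness as $E_f[\hat{m}_N(X)]=E_f[X]$, cancels $f\circ f^{-1}$ to reduce the estimator to the sample mean of the $f(X_k)$, and invokes (\ref{bestpred5}). Your additional remarks on why the $f$-sense of unbiasedness is the right one and on the average staying in the range of $f$ are sensible elaborations of the same argument.
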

The proof is a simple computation. Just invoke (\ref{bestpred5}):
$$E_f[\hat{m}_N(X)]=f^{-1}\Big(E\big[\frac{1}{N}\sum_{k=1}^Nf(X_k)\big]\Big) = f^{-1}\Big(E\big[f(X)\big]\Big) = E_f[X].$$
 We also have
\begin{theorem}\label{lln1}
Under the assumptions made at the beginning, $\hat{m}_N(X) \to E_f[X]$ except for a $\bbP$-null set.
\end{theorem}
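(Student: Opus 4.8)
The plan is to reduce the almost-sure convergence of $\hat{m}_N(X)$ to the classical strong law of large numbers applied to the transformed variables $f(X_n)$, and then to transport the conclusion back through $f^{-1}$ using continuity.

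First I would set $Y_n = f(X_n)$. Since $\{X_n\}$ is i.i.d.\ and $f$ is a fixed Borel-measurable map, $\{Y_n\}$ is again a sequence of i.i.d.\ real random variables, each with the common law of $Y = f(X)$. The hypothesis $X\in\cL_f^2$ gives $E[|f(X)|^2]<\infty$, hence in particular $E[|f(X)|]<\infty$, so the $Y_n$ are integrable. By Kolmogorov's strong law of large numbers there is a $\bbP$-null set outside of which
$$\frac{1}{N}\sum_{k=1}^N f(X_k) \longrightarrow E[f(X)] \quad\text{as }N\to\infty.$$

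Next I would verify that the limit $E[f(X)]$ lies in the domain $f(\cI)$ of $f^{-1}$, so that the final step is meaningful. Since $\cI$ is an interval, its image $f(\cI)$ is an interval (a continuous monotone image of an interval), and $f(X)$ takes values in $f(\cI)$. The expectation of a random variable valued in an interval lies in the closure of that interval; moreover, when an endpoint of $f(\cI)$ is finite but excluded, the strict inequality $f(X)$ versus that endpoint forces $E[f(X)]$ to stay strictly on the interior side (an integrable variable that is a.s.\ strictly below a finite bound has mean strictly below it), while any infinite endpoint is excluded by integrability. Hence $E[f(X)]\in f(\cI)$, and on this set $f^{-1}$ is well defined and continuous, being the inverse of the continuous strictly increasing map $f$.

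Finally, applying the continuous map $f^{-1}$ to the almost-sure convergence above—legitimate because $f^{-1}$ is continuous at the limit point $E[f(X)]$—yields, on the same $\bbP$-null complement,
$$\hat{m}_N(X) = f^{-1}\Big(\frac{1}{N}\sum_{k=1}^N f(X_k)\Big) \longrightarrow f^{-1}\big(E[f(X)]\big) = E_f[X],$$
where the last equality is (\ref{bestpred5}). This is exactly the claimed convergence. The only step requiring genuine care is the verification that $E[f(X)]$ sits inside the domain of $f^{-1}$ rather than at a missing boundary point; once that is secured, the remainder is a direct invocation of the strong law together with the continuity of $f^{-1}$.
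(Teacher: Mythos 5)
Your proof is correct and follows essentially the same route as the paper: apply the strong law of large numbers to the i.i.d.\ integrable sequence $f(X_k)$ and then transport the almost-sure limit back through the continuous inverse $f^{-1}$. The one point you treat more carefully than the paper---verifying that $E[f(X)]$ actually lies in $f(\cI)$ so that $f^{-1}$ is defined and continuous at the limit---is a worthwhile addition, since the paper's proof simply says ``invoke the continuity of $f$'' without addressing it.
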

\begin{proof}
Under the assumptions on the sequence $\{X_n:n\geq1\},$ the strong law of large numbers holds for the sequence $\{f(X_k):k\geq1\}$, that is,
$$\frac{1}{N}\big(f(X_1) + ... + f(X_N) \rightarrow E[f(X)],\;\;\;\bbP-\mbox{a.s.}$$
Invoke the continuity of $f$ to finish the proof. For full detail about the LLN see either Jacod and Protter \cite{JP} of Borkhar \cite{Bork}. 
\end{proof}  
{\bf Comment:} A similar result holds when conditioning with respect to a sub-$\sigma$-algebra $\cG.$ To complete, we state the analogue of the Central Limit Theorem in the $f-$distance. The proof can be seen in \cite{JP} for example.
\begin{theorem}\label{clt}
Let $\{X_n|n\geq 1\}$ be and i.i.d. $\cI$ valued random variables with finite $f-$variance (denoted by $\sigma_f$). Put $\mu_f=E_f[X]$ and $S_n=\sum_{k=1}^n(f(X_k)-\mu_f).$ Then 
$$Z_n = \frac{1}{\sigma_f\sqrt{n}}S_n\,\,\,\mbox{converges in distribution to}\;\; Z$$
where $Z$ that has an $N(0,1)$ distribution.
\end{theorem}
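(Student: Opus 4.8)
The plan is to reduce this statement to the classical Central Limit Theorem applied to the transformed sequence. The key observation, exactly as in the proofs of Theorems \ref{empmean1} and \ref{lln1}, is that the entire $f$-structure is obtained by pushing the ordinary structure through the bijection $f$. Since $\{X_n : n\geq 1\}$ are i.i.d. and $\cI$-valued, the random variables $\{f(X_n) : n\geq 1\}$ are i.i.d. and $\bbR$-valued. The hypothesis that $X$ has finite $f$-variance means precisely, by Definition \ref{fvar1} and equation (\ref{fvar2}), that $\mathrm{Var}\big(f(X)\big) = E\big[(f(X) - E[f(X)])^2\big] = \sigma_f^2 < \infty$. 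Likewise, by (\ref{bestpred5}), $\mu_f = E_f[X] = f^{-1}(E[f(X)])$, so that $f(\mu_f) = E[f(X)]$ is exactly the ordinary mean of $f(X)$.

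With these identifications in hand, I would first rewrite the partial sum in terms of centered ordinary random variables. Setting $Y_k = f(X_k)$, the quantities $S_n = \sum_{k=1}^n (f(X_k) - \mu_f)$ should really read $S_n = \sum_{k=1}^n \big(f(X_k) - f(\mu_f)\big) = \sum_{k=1}^n \big(Y_k - E[Y_1]\big)$, since $\mu_f$ enters only through $f(\mu_f) = E[f(X)]$. Thus $S_n$ is the ordinary centered partial sum of the i.i.d. sequence $\{Y_k\}$, whose common mean is $E[Y_1]$ and whose common variance is $\sigma_f^2$.

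The second step is a direct appeal to the classical Lindeberg–Lévy Central Limit Theorem for i.i.d. real-valued random variables with finite variance: the normalized sum
$$
Z_n = \frac{1}{\sigma_f \sqrt{n}} S_n = \frac{1}{\sigma_f \sqrt{n}} \sum_{k=1}^n \big(Y_k - E[Y_1]\big)
$$
converges in distribution to a standard normal $Z \sim N(0,1)$. This is the statement found in the references already cited for the law of large numbers, for instance Jacod and Protter \cite{JP}; no new analytic input beyond finiteness of the variance is needed.

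I do not expect a genuine obstacle here, since the result is essentially the classical CLT in disguise. The only point requiring care is notational consistency: one must verify that $\mu_f$ in the centering term is interpreted through $f$ (i.e.\ that the relevant constant subtracted from $f(X_k)$ is $f(\mu_f) = E[f(X)]$ rather than $\mu_f$ itself), and confirm via (\ref{fvar2}) that the $f$-variance $\sigma_f^2$ coincides with the ordinary variance of $f(X)$. Once these identifications are made explicit, the convergence follows immediately and nothing beyond the standard theorem is invoked.
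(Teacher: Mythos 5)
Your proposal is correct and is exactly the argument the paper intends: the paper gives no proof beyond citing Jacod and Protter, and the result is indeed the classical Lindeberg--L\'evy CLT applied to the i.i.d.\ sequence $Y_k=f(X_k)$ with variance $\sigma_f^2$. Your observation that the centering constant in $S_n$ must be read as $f(\mu_f)=E[f(X)]$ rather than $\mu_f$ itself is a genuine (and needed) correction to the statement as printed.
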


\section{Some extensions} 
Let us first examine rapidly some obvious extensions to the material in Section 2. With the notation introduced there, for a collection $\{x_1,...,x_n\}$ of points in $cI$ and a collection $\{w_1,...,w_n\}$ of positive weights, define the weighted (squared) $f-$distance of the collection to a point $m\in \cI$ by
$$\sum_{k=1}^nw_k(f(x_k)-f(m))^2.$$
A repetition of the same argument shows that the $m^*$ that minimizes the previous expression is
\begin{equation}\label{empmean4}
m^* = f^{-1}\Big(\sum_{k=1}^n\frac{w_k f(x_k)}{\sum_{k=1}^n w_k}\Big)
\end{equation}
If instead of a finite collection of points, we have a $\cI-$ valued random variable $X$ with distribution function $P(dx).$ Instead of a collection of weights we have some positive function $w(x)$ on $\cI,$ we might similarly define the weighted distance between $X$ and a constant $m\in\cI$ by
$$\int_\cI w(x)\big(f(x)-f(m)\big)^2P(dx)$$
Similarly, the point $m$ that minimizes this expression is
\begin{equation}\label{empmean5}
x^* = f^{-1}\Big(\frac{1}{\int w(x)P(dx)}\int w(x)f(x)P(dx)\Big)
\end{equation}
Below, $\cI$ and $\cJ$ denote intervals such that $f:\cI\to\cJ$ and $f^{-1}:\cJ\to\cI$ are strictly increasing bijections. The next result relates the best predictors in the distances determined by $f$ and 
$f^{-1}$ when $f$ is a concave (resp. convex) function.
\begin{theorem}\label{ext1}
With the notations just introduced, let $(\Omega,\cF,P)$ be a probability space and $\cG\subset\cF$ be a sub$-\sigma-$algebra. Let $X$  be a random variable taking values either in $\cI$ or $\cJ.$ Then:\\
{\bf a} If $f$ is concave we have:
$$E_f[X|\cG] \leq E[X|\cG] \leq E_{f^{-1}}[X|\cG].$$
{\bf b} If $f$ is convex we have
$$E_{f^{-1}}[X|\cG] \leq E[X|\cG] \leq E_{f}[X|\cG].$$
\end{theorem}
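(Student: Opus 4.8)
The plan is to reduce everything to the conditional Jensen inequality applied to $f$ itself, together with the monotonicity of $f$ and $f^{-1}$. Recall the two objects in play: $E_f[X|\cG]=f^{-1}\big(E[f(X)|\cG]\big)$ and, since the compositional inverse of $f^{-1}$ is $f$, $E_{f^{-1}}[X|\cG]=f\big(E[f^{-1}(X)|\cG]\big)$. I would first record the hypotheses needed for these to exist: for the left member $X$ should be $\cI$-valued with $f(X)$ integrable, and for the right member $X$ should be $\cJ$-valued with $f^{-1}(X)$ integrable; the cleanest reading of the statement is that $X$ takes values in $\cI\cap\cJ$, so that the whole chain is simultaneously defined. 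With this in place the two displayed inequalities in each part are established independently.

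Treat part {\bf a} first, $f$ concave and increasing. For the left inequality, conditional Jensen for the concave $f$ gives $E[f(X)|\cG]\le f\big(E[X|\cG]\big)$; applying the increasing map $f^{-1}$ to both sides yields $E_f[X|\cG]=f^{-1}\big(E[f(X)|\cG]\big)\le E[X|\cG]$. For the right inequality I would substitute $Z=f^{-1}(X)$, so that $X=f(Z)$ and $Z$ is $\cI$-valued; conditional Jensen for the \emph{same} concave $f$ applied to $Z$ reads $E[f(Z)|\cG]\le f\big(E[Z|\cG]\big)$, i.e. $E[X|\cG]\le f\big(E[f^{-1}(X)|\cG]\big)=E_{f^{-1}}[X|\cG]$. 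The point worth emphasizing is that both inequalities follow from Jensen for $f$ alone, once for $X$ and once for the transformed variable $Z=f^{-1}(X)$, so one never needs a separate convexity argument for $f^{-1}$.

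Part {\bf b}, $f$ convex, is the mirror image: every use of Jensen reverses, so $f\big(E[X|\cG]\big)\le E[f(X)|\cG]$ gives $E[X|\cG]\le E_f[X|\cG]$ after applying $f^{-1}$, while the substitution $Z=f^{-1}(X)$ gives $E_{f^{-1}}[X|\cG]\le E[X|\cG]$ in the same way. I would present {\bf a} in full and obtain {\bf b} either by repeating the argument with the inequalities reversed, or, more economically, by applying {\bf a} to the strictly increasing concave function $f^{-1}$ in place of $f$ (since $g=f^{-1}$ has $g^{-1}=f$, this reproduces both inequalities of {\bf b} at once).

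There is no serious obstacle here, since the content is a double application of conditional Jensen; the only care needed is bookkeeping. The main thing to get right is the substitution step together with the domain and integrability conventions: one must check that $Z=f^{-1}(X)$ lands in $\cI$ and is integrable enough for Jensen to apply, and one must read ``$X$ valued in $\cI$ or $\cJ$'' as ``$X$ valued in $\cI\cap\cJ$'' so that $E_f[X|\cG]$, $E[X|\cG]$ and $E_{f^{-1}}[X|\cG]$ are simultaneously defined and comparable. If one prefers to avoid the substitution, the clean alternative is the short lemma that a strictly increasing concave $f$ has a strictly increasing convex inverse, after which Jensen applied to the convex $f^{-1}$ delivers the right inequality directly.
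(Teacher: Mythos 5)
Your proposal is correct and follows the same route the paper intends: the paper gives no written proof beyond the remark that the result ``is a direct application of Jensen's inequality,'' and your double application of conditional Jensen (once to $X$ and once to $Z=f^{-1}(X)$, using monotonicity of $f$ and $f^{-1}$ to unwrap) is exactly that argument carried out in detail. Your added care about reading ``valued in $\cI$ or $\cJ$'' as valued in $\cI\cap\cJ$ so that all three conditional means are simultaneously defined is a useful clarification the paper omits.
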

The proof of the statements is a direct application of Jensen's inequality. Clearly, when $f$ is neither concave nor convex on its domain, such results are not available. Consider for example $f(x)=x^3$ defined on the whole real line. The predictors are defined, but there may not exist a comparison among them. 

At this point, we refer the reader once more to examples 4 (with $a=1$) and 5 in Table \ref{tab1}. This example was used recently by Bauer and Zanjani \cite{BZ} in the context of risk capital allocation.

A further extension consists in examining compositions like $f\circ g$ or $g\circ f$ in which $g$ has the appropriate convexity  (concavity) properties such that the inequalities in Theorem \ref{ext1} are preserved. What we need at this point is motivational examples.

\section{Closing comments}
To sum up, not only can generalized arithmetic means be interpreted as minimizers of a distance to a given set of points, that is, in the geometric setting they can be interpreted as a best predictors. Since certainty values are generalized means, in the geometric setting they become best predictors of cash flows. And this combination of roles: predictors and certainty equivalents can be used to define conditional preference criterion as well as a sequence of ``fair'' prices of a random cash flow.

\end{document}